\newcommand{\abs}[1]{{\left|#1\right|}}
\def\XXint#1#2#3{{\setbox0=\hbox{$#1{#2#3}{\int}$}
    \vcenter{\hbox{$#2#3$}}\kern-.5\wd0}}
\theoremstyle{definition}
\theoremstyle{plain}
\newtheorem{teorema}{Theorem}[section]
\newtheorem{corollario}[teorema]{Corollary}
\theoremstyle{definition}
\newtheorem{esempio}{Example}[section]
\newtheorem{oss}[esempio]{Remark}
\DeclareMathOperator{\R}{\mathbb{R}}
\newcommand{\myfootnote}[2]{\begingroup
	\def\@makefnmark{}%
	\addtocounter{footnote}{-1}%
	\footnote{\textbf{#1} #2}
	\endgroup}
 \title{ A remark on solutions to semilinear equations with Robin boundary conditions}
\author{Celentano Antonio, Masiello Alba Lia, Paoli Gloria*}
\date{}
\newcommand{\Addresses}{{
  \bigskip 
  \footnotesize 
 
  \medskip 
 
  \textit{E-mail address}, A.~Celentano: \texttt{antonio.celentano2@unina.it} 
 
   \medskip 
 
  \textit{E-mail address}, A.L.~Masiello : \texttt{albalia.masiello@unina.it} 
   \medskip 
 
   \medskip 
     
    \textsc{Dipartimento di Matematica e Applicazioni ``R. Caccioppoli'', Universit\`a degli studi di Napoli Federico II, Via Cintia, Complesso Universitario Monte S. Angelo, 80126 Napoli, Italy.}
    
    \medskip 
       \textit{E-mail address}, G.~Paoli* (corresponding author): \texttt{gloria.paoli@fau.de} 
   
 \medskip
 
 \textsc{ Department Mathematik,
Chair in Dynamics, Control, Machine Learning and Numerics (Alexander von Humboldt-Professorship),
Cauerstr. 11,
91058 Erlangen, Germany.}
    
    \par\nopagebreak 

}}
\begin{document}
\maketitle

\begin{abstract} 
Symmetry properties of solutions to elliptic quasilinear equations have been widely studied in the context of Dirichlet boundary conditions. 
We show that, in the context of Robin boundary conditions, the symmetry property á la Gidas, Ni and  Nirenberg  does not hold in dimension $n\geq 2$, even for superharmonic functions, and we provide an explicit example. 

\textsc{MSC 2020:}  35B06; 35J25; 35B35.

\textsc{Keywords:}  Poisson problem; Robin boundary conditions; symmetry breaking.

\end{abstract}

\section{Introduction}
The task of proving symmetries of solutions to quasilinear or nonlinear PDEs that reflect the symmetries of the domain has interested many authors. In this context, the classical result by Gidas, Ni and Nirenberg, contained in the celebrated paper \cite[Theorem 1]{GNN}, is stated as follows.

\begin{teorema}[Gidas-Ni-Nirenberg symmetry result]\label{gidas}
Let $f:\R\to\R$ be such that $f=f_1+f_2$, where  $f_1$ is a locally Lipschitz function and $f_2$ is non decreasing. Then, any positive solution $u\in C^2(\overline{B})$ to the problem
    \begin{equation}\label{ni}
\begin{cases}
    -\Delta u= f(u) & \text{in } B,\\
    u=0 &\text{on } \partial  B,
\end{cases}
\end{equation}
where $B$ is a ball of $\R^n$ of radius $R$, has to be radial and 
\begin{equation*}
\dfrac{\partial u}{\partial r}<0,\quad\text{for }\,0<r<R.
\end{equation*}
\end{teorema}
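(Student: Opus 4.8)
The plan is to prove the result by the method of moving planes, exploiting the invariance of both the ball and the Laplacian under reflections through hyperplanes. Since the statement is rotationally invariant, it suffices to show that $u$ is symmetric with respect to every hyperplane through the center of $B$ and monotone in the orthogonal direction; radial symmetry then follows by letting the direction vary. After centering $B$ at the origin, I would fix the direction $e_1$ and, for $\lambda\in(0,R)$, introduce the caps $\Sigma_\lambda=\{x\in B:x_1>\lambda\}$, the hyperplanes $T_\lambda=\{x_1=\lambda\}$, and the reflection $x^\lambda=(2\lambda-x_1,x_2,\dots,x_n)$. The comparison function is $w_\lambda(x)=u(x^\lambda)-u(x)$, defined on $\Sigma_\lambda$.

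Since reflections commute with the Laplacian, $u(\cdot^\lambda)$ solves the same equation, so $-\Delta w_\lambda=f(u(x^\lambda))-f(u(x))$ in $\Sigma_\lambda$. Here the hypothesis $f=f_1+f_2$ enters decisively: writing $f(u(x^\lambda))-f(u(x))=[f_1(u(x^\lambda))-f_1(u(x))]+[f_2(u(x^\lambda))-f_2(u(x))]$ and using the local Lipschitz bound on $f_1$, I can produce a bounded function $c_\lambda$, with $\abs{c_\lambda}\le L$ for a constant $L$ depending only on $\norma{u}_\infty$ and the Lipschitz constant of $f_1$, such that $-\Delta w_\lambda-c_\lambda w_\lambda=f_2(u(x^\lambda))-f_2(u(x))$. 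On the region where $w_\lambda\ge 0$ the monotone part $f_2$ contributes a term with the favorable sign, so there $-\Delta w_\lambda-c_\lambda w_\lambda\ge 0$; that is, $w_\lambda$ is a supersolution of a linear operator with bounded—but a priori sign-indefinite—zeroth order coefficient. On the boundary we have $w_\lambda=0$ on $T_\lambda$ and $w_\lambda\ge 0$ on the spherical part, since there $u=0\le u(x^\lambda)$.

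The argument then proceeds in two stages. First I would start the planes near the boundary: for $\lambda$ close to $R$ the cap $\Sigma_\lambda$ has small width, so the maximum principle in domains of small measure (in the spirit of Berestycki–Nirenberg) applies despite the undetermined sign of $c_\lambda$, yielding $w_\lambda\ge 0$, and then $w_\lambda>0$ in the interior by the strong maximum principle. I would then set $\lambda_0=\inf\{\mu\in(0,R):w_\lambda>0\text{ in }\Sigma_\lambda\text{ for all }\lambda\in(\mu,R)\}$ and argue $\lambda_0=0$ by contradiction. If $\lambda_0>0$, continuity gives $w_{\lambda_0}\ge 0$, hence $w_{\lambda_0}>0$ in the interior by the strong maximum principle, and the Hopf boundary lemma forces $\partial w_{\lambda_0}/\partial x_1<0$ on $T_{\lambda_0}\cap B$; combining this boundary control with the small-measure maximum principle on the thin region added when $\lambda$ decreases slightly below $\lambda_0$ lets me push the plane further, contradicting the definition of $\lambda_0$. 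Therefore $\lambda_0=0$, so $w_0\ge 0$, meaning $u(x^0)\ge u(x)$ for $x_1>0$; running the identical argument from the opposite side gives the reverse inequality, whence $u$ is symmetric across $\{x_1=0\}$. Moreover $w_\lambda>0$ for every $\lambda\in(0,R)$ encodes strict monotonicity $\partial u/\partial x_1<0$ for $x_1>0$, which together with radial symmetry yields $\partial u/\partial r<0$ for $0<r<R$.

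The hard part will be handling the linear comparison operator when the zeroth order coefficient $c_\lambda$ has no definite sign, since the naive maximum principle then fails. The decomposition $f=f_1+f_2$ is precisely what rescues the argument: the Lipschitz part is absorbed into the bounded coefficient $c_\lambda$, while the monotone part keeps a favorable sign on $\{w_\lambda\ge 0\}$, and the small width of the caps—both at the start and in the perturbation step near $\lambda_0$—guarantees the validity of the maximum principle in domains of small measure. The remaining delicate points are the correct application of the Hopf lemma at $T_{\lambda_0}\cap B$, which requires the interior sphere condition together with strict positivity of $w_{\lambda_0}$ in the interior, and the care needed near the points where $T_\lambda$ meets $\partial B$, where the cap degenerates and the geometry must be controlled to keep the thin-domain estimate uniform.
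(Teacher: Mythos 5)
The paper does not prove this theorem: it is quoted as a classical result, with the remark that the proof in \cite{GNN} rests on the method of moving planes, so there is no in-paper argument to compare against. Your sketch is precisely that argument, in its refined Berestycki--Nirenberg form (maximum principle in domains of small measure in place of Serrin's corner lemma at $T_\lambda\cap\partial B$, the locally Lipschitz part $f_1$ absorbed into a bounded zeroth-order coefficient and the monotone part $f_2$ retaining the favourable sign), and as an outline it is correct; the one imprecision is that the sign argument for $f_2$ should be run on the open set where $w_\lambda<0$, where $w_\lambda$ becomes a subsolution of the linear operator and the small-measure maximum principle forces that set to be empty, rather than on $\{w_\lambda\ge 0\}$ as written.
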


In order to prove this result the authors make use of the method of moving planes, first introduced by Aleksandrov in \cite{aleksandrov}, and then applied by Serrin in \cite{Serrin} in the context of PDEs. 

In the case $n=2$,  Lions in \cite{Lions2} gives an alternative proof of Theorem \ref{gidas}, that allows  considering weaker smoothness assumptions on $f$ and $u$, under the additional  hypothesis $f\ge0$. The technique used in \cite{Lions2} relies on the Schwartz symmetrization, the isoperimetric inequality  and the Pohozaev identity. His techniques were applied and generalized in \cite{kesevan_pacella} to the context of the $n-$Laplacian, and, eventually,  in \cite{serra} to the solutions to $p-$Laplace equation for any $p$,  in any dimension $n\geq 2$.

The result contained in \cite{GNN} is a milestone in  proving symmetry results: in the linear case see, for instance,  \cite{bereniren, berenire2, fraenkel, rosset} and, 
for the symmetry results in the case of  the $p-$Laplace operator, we refer  to \cite{ badiale, DP, DS, brock}. In all the afore-mentioned papers, it is possible to prove the radiality of positive solution to \eqref{ni} either
 under the regularity hypothesis on $f$ stated in Theorem \ref{gidas} (using the moving plane method as in \cite{GNN}) or under
 the assumption $f\ge 0$ (with symmetrization techniques as in \cite{Lions2, serra, kesevan_pacella}). For a sign changing $f$, the Lipschitz continuity property cannot be relaxed to H\"older continuity, as remarked in \cite[Section 2.3]{GNN}. Indeed, in this case, the authors find a solution to \eqref{ni} that is not radially symmetric, see also \cite{brok2} for the $p-$Laplacian case. 

The aim of the present work is to study the behaviour of the solution to the Robin problem  
\begin{equation}\label{robin}
\begin{cases}
    -\Delta u= f(u) & \text{in } B\\
    \displaystyle{\frac{\partial u}{\partial \nu}+ \beta u=0} &\text{on } \partial B
\end{cases}
\end{equation}
whenever $\beta$ is a positive parameter. Up to our knowledge, in the literature, there are few results dealing with  symmetry properties of the solutions to differential equations with Robin boundary conditions. For instance,  in \cite{beret_wei}, the authors consider the following problem

\begin{equation}\label{brni}
\begin{cases}
    -\varepsilon^2\Delta u= f(u) - u & \text{in } B\\
    \displaystyle{\varepsilon \frac{\partial u}{\partial \nu}+ \beta u=0} &\text{on } \partial B
\end{cases}
\end{equation}
 where $\varepsilon>0,$ $f: \R\to\R$ is a continuous function, of the form $f=f_1-f_2$ for $t\ge 0$, with $f_1,f_2\ge 0$, 
 satisfying some structural growth conditions (for the precise details see \cite[Section 1]{beret_wei}).
Under these assumptions, it is possible to prove the existence of a positive solution to \eqref{brni} by making use of the Palais-Smale condition and the mountain pass Lemma. The authors show that there exists a $\beta_*>1$, such that,
for $\beta>\beta_*$, the least energy solution has the maximum in the center of the ball, while, for $\beta\leq \beta_*$ and $\varepsilon\to 0$, the least energy solution has the maximum near the boundary, and, consequently, it cannot be radially symmetric.   For completeness sake, we recall that
the  case $\beta=0$, i.e. the Neumann problem, and the case $\beta=+\infty$, i.e the Dirichlet problem, have  been studied, for instance, respectively,  in \cite{takagi_91,takagi2} and  \cite{dirispike}. Moreover, these results
hold in the more general case of $\Omega\subset\R^n$ bounded and smooth, as well as the one proved in \cite{beret_wei} for the Robin boundary conditions.

In  Theorem \ref{th1}, we show that in the one dimensional case the symmetry result for the solution to \eqref{robin} holds under the standard hypotheses of Gidas, Ni, Nirenberg and the  additional hypothesis $f\ge 0$. On the other hand, this is not the case  for  $n\geq 2$, as pointed out in   Corollary \ref{cor1}:

\begin{quote} \it
   In dimension $n\ge 2$, there exists a positive superharmonic function $\varphi$ that is a solution to \eqref{robin}  and that is not radially symmetric.
\end{quote}

So, the main novelty of our paper is that we have found a non radial solution to problem \eqref{robin} when the nonlinearity $f$ is positive and this solution is explicit (see Theorem \ref{main}).


\section{One dimensional case: the symmetry holds.}
We start by analysing the one dimensional case.  
\begin{teorema}\label{th1}
Let $R>0$  and let $I=]-R, R[$ be the open ball of radius $R$. Let $u\in C^2([-R,R]])$ be a solution to
\begin{equation*}
\begin{cases}
-u''=f(u) & \mbox{in } I,\\ 
\dfrac{\partial u}{\partial \nu}(x)+\beta u(x)=0 & \mbox{in } x= \pm R,
\end{cases}
\end{equation*}
 where $\beta>0$.  Let us assume that $f$ satisfies the following assumptions:
 \begin{enumerate}[label=(\roman*).]
     \item $f\ge0$ in $\R$, $f$ is not identically zero in $u(I)$,
     \item $f=f_1+f_2$, where $f_1$ is locally Lipschitz in $\R$ and $f_2$ is non decreasing.
 \end{enumerate}
  Then,   $u(x)=u(-x)$ for all $x\in [-R,R]$. Moreover, 
  \begin{equation*}\label{decr}
      u'(x)<0, \quad x\in [0,R].
  \end{equation*}

\end{teorema}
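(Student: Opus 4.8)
The plan is to combine the concavity forced by $f\ge 0$ with a phase-plane (energy) analysis of the autonomous equation, reserving the decomposition $f=f_1+f_2$ for the very last step, where it rules out a degenerate maximum. First I would rewrite the boundary conditions in terms of $u'$: since the outer normal is $+1$ at $x=R$ and $-1$ at $x=-R$, they read $u'(R)=-\beta u(R)$ and $u'(-R)=\beta u(-R)$. Because $-u''=f(u)\ge 0$, the function $u$ is concave and $u'$ is non-increasing. I would then show $u(\pm R)>0$: if, say, $u(R)\le 0$ then $u'(R)\ge 0$, concavity gives $u'\ge 0$ throughout, forcing $u$ non-decreasing and, together with the condition at $-R$, $u\equiv\mathrm{const}$; but then $f\equiv 0$ on $u(I)$, against hypothesis (i). Hence $u(\pm R)>0$, and by concavity $u>0$ on all of $[-R,R]$. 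Moreover $u'(-R)=\beta u(-R)>0>-\beta u(R)=u'(R)$, so $u'$ vanishes at an interior point and $u$ attains an interior maximum $M$.

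Next I would introduce the conserved energy $E=\tfrac12 (u')^2+F(u)$, with $F(t)=\int_0^t f$, constant because $u''+f(u)=0$. Evaluating at the maximum gives $E=F(M)$, while the boundary conditions show that both $a:=u(-R)$ and $b:=u(R)$ solve $2(E-F(t))=\beta^2 t^2$, i.e. $g(t):=2F(t)+\beta^2 t^2=2E$. Since $g'(t)=2f(t)+2\beta^2 t>0$ for $t>0$, the map $g$ is strictly increasing on $(0,\infty)$, so this equation has a unique positive root; therefore $u(-R)=u(R)=a$. Writing the motion in separated form $dx=\pm\,du/\sqrt{2(E-F(u))}$, the lengths of the increasing and decreasing branches are $x_0+R=\int_a^M du/\sqrt{2(E-F(u))}$ and $R-x_0=\int_a^M du/\sqrt{2(E-F(u))}$, where $x_0$ is the location of the maximum; these are equal, whence $x_0=0$. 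The same separated relation, being a strictly monotone bijection between $x\in[0,R]$ and $u\in[a,M]$, shows that the decreasing branch is unique, so $u(x)=u(-x)$ and $u$ is strictly decreasing on $[0,R]$, giving $u'<0$ on $(0,R]$ (at the endpoint $u'(R)=-\beta a<0$).

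The main obstacle is that $f$ need not be Lipschitz, so the initial-value problem for $-u''=f(u)$ may fail to be uniquely solvable, and a priori the maximum could be attained on a whole interval $[\alpha,\gamma]$ on which $u\equiv M$ and $f(M)=0$, which would destroy the strict monotonicity and make the separated integrals improper. This is exactly where hypothesis (ii) enters. If $f(M)=0$, then for $s\le M$ the Lipschitz bound on $f_1$ (with constant $L$ on the compact range of $u$) and the monotonicity of $f_2$ give $f(s)=f_1(s)+f_2(s)\le f_1(M)+L(M-s)+f_2(M)=L(M-s)$, hence $E-F(s)=F(M)-F(s)\le \tfrac{L}{2}(M-s)^2$ and
\[
\int_a^M \frac{ds}{\sqrt{2(E-F(s))}}\ \ge\ \int_a^M \frac{ds}{\sqrt{L}\,(M-s)}=+\infty .
\]
But this integral equals the finite length of the increasing branch, a contradiction; thus $f(M)>0$. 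Consequently $u''=-f(M)<0$ at the maximum, so the maximum is a single point, there is no flat part, and all the separated-variable integrals converge and are genuinely strictly monotone. Once this degeneracy is excluded, the energy argument closes and yields both the symmetry $u(x)=u(-x)$ and the strict decay $u'<0$ on $(0,R]$.
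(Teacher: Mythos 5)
Your proof is correct, but it follows a genuinely different route from the paper. The paper, after establishing positivity exactly as you do, reduces everything to the Gidas--Ni--Nirenberg theorem used as a black box: to show $u(R)=u(-R)$ it argues by contradiction, locating the first point $\lambda$ where $u$ returns to the value $u(-R)$, applying Theorem \ref{gidas} on the subinterval $(-R,\lambda)$ to get symmetry there, and then contradicting the Robin conditions via the monotonicity of $u'$; the final symmetry statement is again delegated to Theorem \ref{gidas} applied to $u-u(R)$. You instead give a self-contained phase-plane argument: the conserved energy $E=\tfrac12(u')^2+F(u)$ together with the Robin conditions shows that $u(\pm R)$ are both roots of the strictly increasing function $g(t)=2F(t)+\beta^2t^2$ on $(0,\infty)$, which yields $u(R)=u(-R)$ directly without any contradiction argument, and the separated-variable (time-map) identity then gives both the centering of the maximum and the evenness of $u$. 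Your use of hypothesis (ii) is also different in spirit: the paper inherits it through the hypotheses of Theorem \ref{gidas}, whereas you use it only once, in an Osgood-type estimate $F(M)-F(s)\le \tfrac{L}{2}(M-s)^2$ showing that $f(M)=0$ would force the time integral $\int_a^M ds/\sqrt{2(E-F(s))}$ to diverge, which is incompatible with the finite length of the increasing branch; this cleanly isolates the role of the Lipschitz/monotone decomposition as excluding a degenerate (flat) maximum. The trade-off is that your argument is longer and requires a few routine verifications you pass over quickly (conservation of $E$ when $f_2$ has jumps, which holds because $F$ is locally Lipschitz and $u'$ is monotone so the critical set is an interval; validity of the change of variables on each strictly monotone branch), but it is elementary, independent of the moving-plane machinery, and makes transparent exactly where each hypothesis on $f$ enters. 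One small point in your favor: the strict inequality $u'<0$ is correctly asserted on $(0,R]$, where it actually holds (the paper's statement on $[0,R]$ cannot include $x=0$, since $u'(0)=0$ for an even $C^1$ function).
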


\begin{proof}
We  divide the proof in two steps. In the first step, we prove that the function $u$ is strictly positive and, in the second one, we prove that we can apply the result contained in Theorem \ref{gidas}.

\textbf{Step 1}.
We start by proving that  $u>0$ in $[-R,R]$.  Since $u''\leq 0$, $u'$ is non increasing in $]-R, R[$, so the minimum of $u$ on $[-R,R]$ is achieved either in $-R$ or in $R$.
Let us denote by $x_m$ the minimum point of $u$ in $[-R, R]$. 
From the Robin boundary conditions, we have that 
\begin{equation*}
-\beta u(x_m)=\dfrac{\partial u}{\partial \nu}(x_m)\leq 0,
\end{equation*}
and, as a consequence, $u\geq 0$ in $[-R,R]$. 

Now we want to  prove that $u>0$ in $[-R,R]$. By contradiction, we assume that $u(x_m)=0$. If $x_m=-R$, the Robin boundary conditions imply
$$0=\beta u(-R)=-\frac{\partial u}{\partial \nu}(-R)= u'(-R)\ge u'(x), \quad \forall x\in I,$$
where we have observed that $u'$ is non increasing. This implies that also $u$ is a non increasing function, so  $-R$ should be both a minimum and a maximum. This is not possible, since $u$ should be constant and this contradicts the hypothesis $f\not\equiv0 $ in $u(I)$.
Therefore,  we have that  $x_m=R$ and, arguing as before, we have

$$0=-\beta u(R)=\frac{\partial u}{\partial \nu}(R)= u'(R)\le u'(x), \quad \forall x\in I,$$

So $u$ is a non decreasing function, the point $R$ is both a minimum and a maximum and we get a contradiction as before.   

\textbf{Step 2}. 
We prove now that  $u(R)=u(-R)$. 
Let us assume  by contradiction that $u(R)  \neq u(-R)$. Without loss of generality, we can suppose
$u(R)<u(-R)$. 
As a consequence of Step $1$, the function $u$ is strictly increasing in a neighborhood of the point $-R$, so, by the continuity of $u$, there exists $y\in I$ such that $u(y)=u(-R)$. Therefore, the following quantity is well defined  $$\lambda:=\inf\{t\in I\,:\,u(t)=u(-R)\}$$ 
and $\lambda>-R$.  
Moreover, the continuity of $u$  also implies $u(\lambda)=u(-R)$ and $u(x)>u(-R)$
in $(-R, \lambda)$.


We define now the function  $v:=u-u(-R)$, 
that  is a positive solution to 
   \begin{equation*}\label{til}
\begin{cases}
    -\Delta v= \Tilde{f}(v) & \text{in } (-R, \lambda),\\
    v=0 &\text{in } x=-R, \; x=\lambda,
\end{cases}
\end{equation*}
where $\tilde{f}(v)= f(v+u(R))$.
So we can use   Theorem \ref{gidas} in the interval $(-R, \lambda)$. We have that $u$ is symmetric  with respect to the line $x=2^{-1}(\lambda-R)$ and, as a consequence, we get 
\begin{equation}\label{sym}
\dfrac{d u}{d \nu}(-R)=-\dfrac{d u}{d x}(-R)=\dfrac{d u}{d x}(\lambda)<0.
\end{equation}
Using  \eqref{sym} and the fact that   $u'$ is non increasing, we obtain 
\begin{equation}
\beta u(R)=-\dfrac{d u}{d \nu}(R)=-\dfrac{d u}{d x}(R)\geq -\dfrac{d u}{d x}(\lambda)=-\dfrac{d u}{d \nu}(-R)=\beta u(-R), 
\end{equation}
and, therefore,
\begin{equation}
u(R)\geq u(-R),
\end{equation}
that is a contradiction.

From Step $1$ and $2$, we have that the funtion $v=u-u(R)$  is a positive solution to 
   \begin{equation*}
\begin{cases}
    -\Delta v= \Tilde{f}(v) & \text{in } I,\\
    v=0 &\text{in } x=\pm R.
\end{cases}
\end{equation*}
So we can conclude by using  Theorem \ref{gidas}. 

\end{proof}
We do not know if the hypotheses $i)-ii)$ on the function $f$ are the  optimal ones to obtain the symmetry result. Nevertheless,  we will show in   Remark \Ref{rem1d}, in the next session, that  the assumption $f\geq 0$ cannot be removed.

\section{Counterexample:   symmetry  breaking in dimension $n\geq 2$}
In the following, we will denote by $|\cdot|$ the Euclidean norm in $\mathbb{R}^n$.

\begin{teorema}\label{main}
    Let $B_R\subset \mathbb{R}^n$, $n\geq 2$, be the ball centered at the origin with radius $R$, let $\beta$ be a positive constant and let $x_0\neq 0$ in $B_R$. 
Then, there exists a the positive function $\varphi\in C^\infty(\overline{B_R})$ that is a non radial function in $B_R$ (i.e. $\varphi(x)\neq \varphi(\abs{x})$) and it is a solution to
     \begin{equation}\label{cont}
        \begin{cases}
            -\Delta \varphi= f(\varphi)
          &\text{in }B_R,\\
            \dfrac{\partial \varphi}{\partial \nu}+\beta \varphi=0  &\text{on }\partial B_R,
        \end{cases}
    \end{equation}
     where
\begin{equation}\label{f}
    f(t)=c_1 t\left[ c_2 t^{\frac{1}{\beta R}}+ c_3 t^{\frac{2}{\beta R}}\right],
\end{equation}
with  $c_1, c_3>0$, $c_2\in \R$   defined as follows:
    \begin{equation*}
        c_1=2\beta R, \quad c_2=-2(\beta R+1) +n, \quad c_3=2(\beta R+1)\alpha^2, \quad \alpha^2=R^2-\abs{x_0}^2.
    \end{equation*}
\end{teorema}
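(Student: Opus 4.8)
The statement is established by exhibiting and then directly verifying the explicit candidate
\begin{equation*}
\varphi(x) = \left( \abs{x-x_0}^2 + R^2 - \abs{x_0}^2 \right)^{-\beta R}.
\end{equation*}
The plan is to set $w(x) := \abs{x-x_0}^2 + \alpha^2$ with $\alpha^2 = R^2 - \abs{x_0}^2$, so that $\varphi = w^{-\beta R}$. Since $x_0 \in B_R$ forces $\abs{x_0} < R$ and hence $\alpha^2 > 0$, one has $w \ge \alpha^2 > 0$ on all of $\overline{B_R}$; this makes $\varphi$ well defined, smooth and strictly positive, giving $\varphi \in C^\infty(\overline{B_R})$ with $\varphi > 0$. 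For non-radiality I would expand $\abs{x-x_0}^2 = \abs{x}^2 - 2\, x\cdot x_0 + \abs{x_0}^2$ and observe that, since $x_0 \neq 0$, the term $x \cdot x_0$ is non-constant on any sphere $\{\abs{x} = \rho\}$ with $\rho > 0$; hence $\varphi$ is non-constant on such spheres and so cannot depend on $\abs{x}$ alone.

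For the equation I would compute the Laplacian through the chain rule. From $\nabla w = 2(x - x_0)$, $\Delta w = 2n$ and $\abs{\nabla w}^2 = 4\abs{x-x_0}^2$ one obtains
\begin{equation*}
\Delta \varphi = -2\beta R\, n\, w^{-\beta R - 1} + 4\beta R(\beta R + 1)\abs{x-x_0}^2\, w^{-\beta R - 2}.
\end{equation*}
The crucial algebraic move is to eliminate the explicit spatial dependence via $\abs{x-x_0}^2 = w - \alpha^2$, which collapses everything onto the two powers $w^{-\beta R - 1}$ and $w^{-\beta R - 2}$. Rewriting these back in terms of $\varphi$ through $w^{-\beta R - 1} = \varphi^{1 + 1/(\beta R)}$ and $w^{-\beta R - 2} = \varphi^{1 + 2/(\beta R)}$ and collecting coefficients yields exactly
\begin{equation*}
-\Delta \varphi = 2\beta R\, \varphi \left[ \left(n - 2(\beta R + 1)\right) \varphi^{\frac{1}{\beta R}} + 2(\beta R + 1)\alpha^2\, \varphi^{\frac{2}{\beta R}} \right] = f(\varphi),
\end{equation*}
matching the stated constants $c_1, c_2, c_3$; the signs $c_1 > 0$ and $c_3 > 0$ follow from $\beta, R > 0$ and $\alpha^2 > 0$.

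The one step where the particular form of the data is essential is the boundary condition, and this is the part I would handle most carefully. On $\partial B_R$ the outer unit normal is $\nu = x/R$, so using $\nabla \varphi = -2\beta R\, w^{-\beta R - 1}(x - x_0)$ I get
\begin{equation*}
\frac{\partial \varphi}{\partial \nu} = \nabla \varphi \cdot \frac{x}{R} = -\frac{2\beta R}{R}\, w^{-\beta R - 1}\left( \abs{x}^2 - x\cdot x_0 \right).
\end{equation*}
Here the choice $\alpha^2 = R^2 - \abs{x_0}^2$ is precisely what is needed: evaluating $w$ on $\abs{x} = R$ gives $w = \abs{x}^2 - 2\, x\cdot x_0 + \abs{x_0}^2 + \alpha^2 = 2\left(R^2 - x\cdot x_0\right) = 2\left(\abs{x}^2 - x\cdot x_0\right)$, so that $\abs{x}^2 - x\cdot x_0 = w/2$. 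Substituting collapses the normal derivative to $\partial_\nu \varphi = -(\beta R/R)\, w^{-\beta R} = -\beta \varphi$, that is $\partial_\nu \varphi + \beta \varphi = 0$ on $\partial B_R$.

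I would expect no genuine analytic obstacle, since the whole argument is a verification. The real content is that the exponent $\beta R$ and the constant $\alpha^2 = R^2 - \abs{x_0}^2$ are reverse-engineered exactly so that, on the sphere, $w$ becomes proportional to $R^2 - x\cdot x_0$ and the power $\beta R$ cancels the radius $R$ in the normal derivative, producing precisely the factor $\beta$. Accordingly, the single point to present cleanly is the boundary computation, while the interior identity is a routine, if slightly lengthy, differentiation.
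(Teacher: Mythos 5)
Your proposal is correct and follows essentially the same route as the paper: both exhibit the explicit function $\varphi(x)=\left(\abs{x-x_0}^2+R^2-\abs{x_0}^2\right)^{-\beta R}$ and verify the interior equation and the Robin condition directly, the only difference being that the paper derives the formula by integrating the ODE that the boundary condition forces on functions radial about $x_0$ (using the law of cosines to express $(x-x_0)\cdot x$ on the sphere), whereas you posit it and check the boundary identity via the algebraic relation $w=2\left(R^2-x\cdot x_0\right)$ on $\abs{x}=R$. Your Cartesian computation with $w=\abs{x-x_0}^2+\alpha^2$ and the substitution $\abs{x-x_0}^2=w-\alpha^2$ reproduces exactly the stated constants $c_1,c_2,c_3$, so the verification is complete.
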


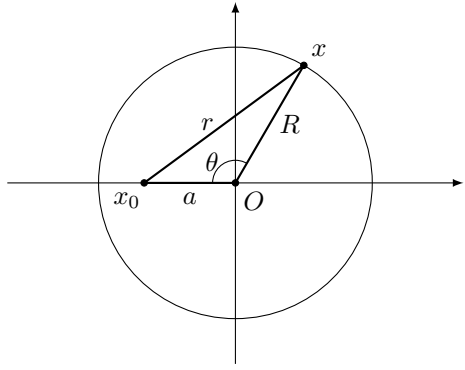
\begin{figure}[h!]
\begin{center}
    \begin{tikzpicture}[scale=1.2]
        \draw[black, -latex, thin] (-1.5,0) -- (3.5,0);
        \draw[black, -latex, thin] (1,-2) -- (1,2);

        \draw[black] (1,0) circle(1.5);
        \filldraw[black] (1,0) circle(1pt) node[anchor = 135] {$O$};
        \draw[black, thick] (0,0) -- (1,0) node[pos = 0.5, below] {$a$};
        \filldraw[black] (0,0) circle(1pt) node[anchor = 45] {$x_0$};

        \filldraw[black] (1,0) ++ (60:1.5) circle(1pt) node[anchor = -135] {$x$};

        \draw[black, thick] (1,0) --++ (60:1.5) node[pos = 0.5, right] {$R$};

        \draw[black, thick] (1,0) ++ (60:1.5) -- (0,0) node[pos = 0.5, left] {$r$};

        \begin{scope}
            \clip (0,0) -- (1,0) --++ (60:1.5) -- cycle;
           \draw[black] (0.75,0) arc(180:0:0.25) node[pos = 0.4, left] {$\theta$};
        \end{scope}
 \end{tikzpicture}
\end{center}
    \caption{Construction of the function $\varphi$} \label{fig}
    \end{figure}

\begin{proof}
 We define the following quantities (see Figure \ref{fig}):
\begin{itemize}
    \item $a:=\abs{x_0}$,
    \item $\alpha^2:=R^2-a^2>0$.
\end{itemize}
We show the existence of a positive function $\varphi(x)=\varphi(\abs{x-x_0})=\varphi(r)$ 
such that 
\begin{equation}\label{Robin_sphere}
\dfrac{\partial \varphi}{\partial \nu}+\beta \varphi=0\qquad \text{on}\;\;\partial B_R
\end{equation}
where  $\nu$  is the unit outer normal to $\partial B_R$.
Let us fix $x\in\partial B_R$. Being
\begin{equation*}
\nabla(\varphi(x))=\varphi'(r) \:\dfrac{x-x_0}{r},\qquad \nu(x)=\dfrac{x}{R},
\end{equation*}
the Robin boundary conditions \eqref{Robin_sphere} becomes
\begin{equation}\label{robin_radial}
\varphi'(r)\;\dfrac{x-x_0}{r}\cdot \dfrac{x}{R}+\beta\varphi (r)=0.
\end{equation}
Denoting, now,  by $\theta$ the angle between the vectors $x_0$ and $x$, we have the following relation
\begin{equation}\label{cos}
   \cos(\theta)=\dfrac{R^2+a^2-r^2}{2aR}. 
\end{equation}
So, from \eqref{robin_radial} and \eqref{cos}, recalling that $\alpha^2=R^2-a^2$, we have
\begin{equation*}
\frac{\varphi'(r)}{r R} \left( R^2-R a \cos(\theta)\right)+\beta \varphi(r)=\frac{\varphi'(r)}{ 2 r R} \left( r^2+\alpha^2\right) +\beta \varphi(r)=0,  \end{equation*}
and, therefore, 
\begin{equation} \label{final_eq}
\dfrac{\varphi'(r)}{\varphi(r)}=-\left( 2\beta R\right)\dfrac{r}{r^2+\alpha^2}.
\end{equation}
Integrating \eqref{final_eq}, we get 

\begin{equation}\label{timoteo}
    \varphi(r)=\dfrac{c}{(r^2+\alpha^2)^{\beta R}}.
\end{equation}
If we choose $c=1$, we have 
\begin{align}
\varphi'(r)&=-\dfrac{2 \beta R r}{(r^2+\alpha^2)^{\beta R+1}}\nonumber\\
\varphi''(r)&=-(-\beta R-1)\dfrac{4\beta R r^2}{(r^2+\alpha^2)^{\beta R+2}}-\dfrac{2 \beta R}{(r^2+\alpha^2)^{\beta R+1}}\nonumber.
\end{align}
and, consequently,
\begin{align*}
    -\Delta \varphi&=-\varphi''(r)-\dfrac{n-1}{r}\varphi'(r)=\\
    &=\dfrac{2\beta R}{(r^2+\alpha^2)^{\beta R+1}}\left[n-2(\beta R+1)+(\beta R+1)\dfrac{2\alpha^2}{r^2+\alpha^2}\right]\\
&=2\beta R\varphi(r)^{\frac{1}{\beta R}+1}\left[n-2(\beta R+1)+ 2(\beta R+1)\alpha^2\varphi(r)^{\frac{1}{\beta R}}\right]\\
&=f(\varphi(r)),
\end{align*}
where $f$ is the function defined in \eqref{f}.
So, we have proved the desired claim, since we have found a non radial function of the form $\varphi(x)=\varphi(\abs{x-x_0})=\varphi(r)$, defined in \eqref{timoteo}, that satisfies \eqref{cont}.
\end{proof}

As a  consequence of Theorem \ref{main},  we obtain the following Corollary. 

\begin{corollario}\label{cor1}
Let $n\ge 2$. There exists a positive superharmonic function $\varphi$ that is a solution to \eqref{robin}  and that is not radially symmetric.
\end{corollario}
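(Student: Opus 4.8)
The plan is to take the explicit function $\varphi$ produced in Theorem \ref{main} and to verify that, for a suitable choice of the free parameters, it is superharmonic. Positivity, smoothness, non-radiality and the fact that $\varphi$ solves \eqref{robin} are already guaranteed by Theorem \ref{main}, so the only additional point to establish is the sign of $-\Delta\varphi$, i.e. that $\Delta\varphi\le 0$ in $B_R$.

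First I would recall the identity computed in the proof of Theorem \ref{main}, namely
\[
-\Delta \varphi=\frac{2\beta R}{(r^2+\alpha^2)^{\beta R+1}}\left[\,n-2(\beta R+1)+(\beta R+1)\frac{2\alpha^2}{r^2+\alpha^2}\right],
\]
and rewrite the bracket in the more transparent form $n-2(\beta R+1)\frac{r^2}{r^2+\alpha^2}$, using $\frac{r^2+\alpha^2-\alpha^2}{r^2+\alpha^2}=\frac{r^2}{r^2+\alpha^2}$. Since the prefactor is strictly positive, superharmonicity is equivalent to the nonnegativity of this bracket throughout $B_R$.

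Next, because $r\mapsto \frac{r^2}{r^2+\alpha^2}$ is increasing, the bracket is a decreasing function of $r=\abs{x-x_0}$, so it attains its minimum over $\overline{B_R}$ at the point farthest from $x_0$, that is at $r=R+a$ with $a=\abs{x_0}$ (the boundary point diametrically opposite to the direction of $x_0$). Evaluating there, with $\alpha^2=R^2-a^2$ and $(R+a)^2+\alpha^2=2R(R+a)$, one finds $\frac{r^2}{r^2+\alpha^2}=\frac{R+a}{2R}$, so the whole requirement reduces to the single inequality $n\ge(\beta R+1)\frac{R+a}{R}$.

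Finally I would observe that this inequality can indeed be met. For any $n\ge 2$ it suffices to fix $x_0\neq 0$ with $\abs{x_0}/R$ small and $\beta$ with $\beta R$ small, so that $(\beta R+1)\bigl(1+a/R\bigr)$ is as close to $1$ as we wish and in particular $\le n$. With such a choice the bracket is nonnegative on $\overline{B_R}$, hence $-\Delta\varphi\ge 0$ and $\varphi$ is superharmonic, while remaining positive, non-radial and a solution of \eqref{robin}. The only genuinely delicate point is locating the minimum of the bracket, equivalently recognising that the binding constraint is dictated by the farthest point $r=R+\abs{x_0}$; once this is identified, the existence of admissible parameters is immediate from $n\ge 2$.
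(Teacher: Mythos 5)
Your argument is correct, and it reaches the same conclusion as the paper by a slightly different and arguably cleaner route. The paper also reduces everything to the sign of $-\Delta\varphi=f(\varphi)$, but it reasons through the function $f$ itself and splits into cases: for $n=2$ it characterises the set $\{t: f(t)\ge 0\}$ and then requires the range of $\varphi$ to lie in it, which (after locating the minimum of $\varphi$ at the boundary point farthest from $x_0$, exactly your binding point $r=R+a$) yields the condition $\beta\le\frac{R-a}{R(R+a)}$ --- algebraically identical to your inequality $n\ge(\beta R+1)\frac{R+a}{R}$ evaluated at $n=2$; for $n\ge 3$ it uses the cruder sufficient condition $c_2\ge 0$, i.e.\ $\beta\le\frac{n-2}{2R}$, which forces $f(t)\ge 0$ for all $t\ge 0$. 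Your rewriting of the bracket as $n-2(\beta R+1)\frac{r^2}{r^2+\alpha^2}$ and direct minimisation in $r$ over $[0,R+a]$ handles all $n\ge 2$ uniformly and produces a single sufficient condition that is strictly weaker than the paper's for $n\ge 3$ (since $\frac{R+a}{R}<2$), at the price of guaranteeing only $f(\varphi)\ge 0$ along the solution rather than $f\ge 0$ on the whole half-line. The step you single out as delicate --- that the constraint binds at the farthest point, where $r^2+\alpha^2=2R(R+a)$ --- is also the hidden content of the paper's $n=2$ computation, and your verification of it is correct.
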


\begin{proof}
    In the case $n=2$, the right-hand side of  \eqref{f} becomes 
$$ 
f(t)=4\beta R t \left(-\beta R t^{\frac{1}{\beta R}}+\alpha^2(1+\beta R) t^{\frac{2}{\beta R}}\right).
$$
We notice that
\begin{equation}
    \label{bound}
  f(t)\geq0,\qquad {\rm if }  \;t\geq\left(\dfrac{\beta R}{\alpha^2(1+\beta R)}\right)^{\beta R},
\end{equation}
 so the function $f\circ\varphi$ is positive if $\varphi$ satisfies \eqref{bound} for all $x\in B_R$,
and this  follows by imposing the following geometric constraint
$$\beta\leq\dfrac{R-a}{R(R+a)}.$$
If $n\geq 3$, we can choose the constant $c_2\geq 0$, by imposing the condition
\begin{equation}
    \label{n3}
    \beta\leq \dfrac{n-2}{2R}
\end{equation}
and, under these assumptions, we have that $f(t)\geq 0$ for $t\geq 0$.

Therefore, we can see that, by imposing the geometrical constraints \eqref{bound} and \eqref{n3} respectively for   $n=2$ and $n\geq 3$, the function $\varphi$ defined in \eqref{timoteo} is an example of positive superharmonic function, that is non radial and that satisfies \eqref{cont}.
\end{proof}


We  conclude  with a remark  for the one dimensional case.

\begin{oss}\label{rem1d}
The function $\varphi$ defined in Theorem \ref{main},  in the case $n=1$,  satisfies the problem 
 \begin{equation}
        \begin{cases}
            - \varphi''= f(\varphi)
          &\text{in } (-R,+R),\\
            \dfrac{\partial \varphi}{\partial \nu}+\beta \varphi=0  &\text{ in} \;x=\pm R.
        \end{cases}
    \end{equation}
  We note that $\varphi\in C^\infty \left([-R, R]\right)$ and $f$ is a locally Lipschitz function, but, $f$ does not satisfy the hypothesis $i)$, that is the positiveness.  
Indeed, by straightforward computations, we obtain that $f\circ\varphi$ is a sign changing function  in  $\varphi([-R, +R])$ for every $\beta>0$ and $R>a>0$. 
\end{oss}





\section*{Acknowledgement and Declarations}

The authors would like to thank Professor Carlo Nitsch for the useful and insightful discussions.

\vspace{2mm}
\noindent \textbf{Funding:} All the  authors are  partially supported by Gruppo Nazionale per l’Analisi
Matematica, la Probabilità e le loro Applicazioni (GNAMPA) of Istituto Nazionale di Alta
Matematica (INdAM). Gloria Paoli is    supported by the Alexander von Humboldt Foundation through an Alexander von Humboldt research fellowship.

\noindent \textbf{Conflict of interst:} There is no conflict of interest to disclose



\printbibliography

\Addresses 

\end{document}